\newcommand{\Z}{\mathbb{Z}}
\newcommand{\N}{\mathbb{N}}
\newcommand{\R}{\mathbb{R}}
\newcommand{\C}{\mathbb{C}}
\newcommand{\Harm}{\mathcal{H}}
\newcommand{\diag}{\text{diag}}
\newcommand\scalemath[2]{\scalebox{#1}{\mbox{\ensuremath{\displaystyle #2}}}}
\theoremstyle{definition}
\newtheorem{proposition}{Proposition}
\newtheorem{lemma}{Lemma}
\newtheorem{theorem}{Theorem}
\newtheorem{corollary}{Corollary}
\theoremstyle{remark}
\begin{document}

\begin{center}
    \begin{Large} Graded multiplicity in harmonic polynomials from the Vinberg setting \end{Large}\\
    Alexander Heaton\\
    \today{}
\end{center}

\begin{abstract}

    We consider Vinberg $\theta$-groups associated to a cyclic quiver on $r$ nodes. Let $K$ be the product of general linear groups associated to the nodes, acting naturally on $V = \oplus \text{Hom}(V_i, V_{i+1})$. We study the harmonic polynomials on $V$ in the specific case where $\dim V_i = 2$ for all $i$. For each multigraded component of the harmonics, we give an explicit decomposition into irreducible representations of $K$, and additionally describe the multiplicities of each irreducible by counting integral points on certain faces of a polyhedron.
\end{abstract}

\section{Introduction} \label{Sec:introduction}

Consider the representations of a cyclic quiver on $r$ nodes. If $r=12$ we have
\begin{center}
\begin{tikzpicture}
\def \n {12}
\def \radius {2.5cm}
\def \margin {8} 
\foreach \s in {1,...,\n}
{
  \node[draw, circle] at ({-360/\n * (\s+9)}:\radius) {$\s$};
  \draw[<-, >=latex] ({360/\n * (\s-1)+\margin}:\radius)
    arc ({360/\n * (\s -1)+\margin}:{360/\n * (\s)-\margin}:\radius);
}
\end{tikzpicture}
\end{center}
For each node $j$, associate a finite-dimensional vector space $V_j$. For each arrow $j \rightarrow j+1$ (mod $r$), associate the space of linear transformations, $\mbox{Hom}(V_j, V_{j+1})$.  Set $V = V_1 \oplus \cdots \oplus V_r$ and let $K$ be the block diagonal subgroup of $G=GL(V)$ isomorphic to $GL(V_1) \times \cdots \times GL(V_r)$ acting on
\begin{equation*}
    \mathfrak p = \mbox{Hom}(V_1,V_2) \oplus \mbox{Hom}(V_2,V_3) \oplus \cdots \oplus \mbox{Hom}(V_{r-1}, V_r) \oplus \mbox{Hom}(V_r, V_1).
\end{equation*}
Here we let $GL(U) \times GL(W)$ act on $\mbox{Hom}(U,W)$ by $(g_1,g_2) \cdot T  = g_2 \circ T \circ g_1^{-1}$, as usual. For $(T_1, \dots, T_r) \in \mathfrak p$, we have $K$-invariant functions defined by
\begin{equation*}
    \text{Trace}\left[ (T_1 \circ \cdots \circ T_r)^p \right]
\end{equation*}
for $1 \leq p \leq \min \{\dim V_j \}$. By a result of Le Bruyn and Procesi \cite{LP90}, these generate the $K$-invariant polynomial functions on $\mathfrak p$.

\color{black} \textbf{Main results:} In this paper, we study the case $\dim V_j = 2$ for all $j \in \{1,\dots,r\}$, and consider the representation of $K = GL_2 \times \cdots \times GL_2$ and also $K = S(GL_2 \times \cdots \times GL_2)$ on the $K$-harmonic polynomials on $\mathfrak{p}$, denoted $\mathcal{H} \subset \C[\mathfrak{p}]$. Theorem \ref{thm:harmonics-character} gives an explicit decomposition of the multigraded component $\mathcal{H}_n$ for $n \in \N^r$, while Theorem \ref{thm:geometric-multiplicity} describes the multiplicity of any particular irreducible representation of $K$ inside $\mathcal{H}_n$ by counting integer points on the intersection of certain polyhedra. Figure \ref{fig:r=3-example-multiplicity-six} displays an example with $r=3$. 

In a recent paper \cite{frohmader-heaton} with Frohmader, we solve the general case $K=GL_{k_1} \times \cdots \times GL_{k_r}$ differently, by summing over certain distinguished tableau, and with results only valid for a stable range of parameters. In particular, the results covered in the present paper fall outside the stable range, and hence are not covered by results from \cite{frohmader-heaton}. The results of \cite{frohmader-heaton} use a branching rule from \cite{branchingrule}, which introduces the stable range restriction. We hope these results may be extended outside the stable range by developing another branching rule using crystal bases, and eventually recover the examples of the present paper as well. \color{black}

Throughout the paper, the ground field is $\C$. Leaving the cyclic quiver for a moment, we recall the the definition of $G$-harmonic polynomials. Let $G$ denote a linear algebraic group. Given a regular representation $V$ of $G$, we denote the algebra of polynomial functions on $V$ by $\C[V]$, defined by identifying with $Sym(V^*)$, the algebra of symmetric tensors on the dual of $V$.

The constant coefficient differential operators on $\C[V]$ will be denoted by $\mathcal{D}(V)$, identified with $Sym(V)$.  The differential operators without a constant term will be denoted $\mathcal{D}(V)_+$ and the $G$-invariant differential operators are $\mathcal{D}(V)^G$. Let
\begin{equation*}
    \mathcal{H}(V) = \left \{ f \in \C[V] : \Delta f = 0 \mbox{ for all } \Delta \in \mathcal{D}(V)^{G}_+ \right \}
\end{equation*}
be the $G$-\emph{harmonic} polynomial functions. In the case of $G=SO_3$ acting on its defining representation, $\mathcal{D}(V)^G_+$ is generated by the Laplacian $\partial_x^2 + \partial_y^2 + \partial_z^2$ and the harmonics decompose into minimal invariant subspaces, which, when restricted to the sphere, admit an orthogonal basis, namely the Laplace spherical harmonics familiar from physics. In that case, decomposing the harmonic polynomials (the subject of this paper) leads to a complete set of orthogonal functions on the sphere, useful in numerous theoretical and practical applications. For a nice exposition from this perspective, see \cite{V89}.

In general, every polynomial function can be expressed as a sum of $G$-invariant functions multiplied by $G$-harmonic functions.  That is, there is a surjection
\begin{equation*}
    \C[V]^G \otimes \mathcal{H}(V) \rightarrow \C[V] \rightarrow 0
\end{equation*}
obtained by linearly extending multiplication.  
For the Laplace spherical harmonics, this is an isomorphism. All invariants are generated by the squared Euclidean distance function, and any polynomial can be written uniquely as a product of its radial and spherical components.

The scalar multiplication of $\C$ on $V$ commutes with the action of $G$.  The resulting $\C^\times$ action gives rise to a gradation on $\C[V]$, which is the usual notion of \emph{degree}.  The $G$-harmonic functions inherit this gradation, so we define $\mathcal{H}_n(V)$ as the homogeneous $G$-harmonic functions of degree $n$. We have the direct sum of $G$-representations
\begin{equation*}
    \mathcal{H}(V) = \bigoplus_{n=0}^\infty \mathcal{H}_n(V).
\end{equation*}
For a reductive linear algebraic group $G$, every regular representation is completely reducible.  Let $\{F_\mu\}_{\mu \in \widehat{G}}$ denote a set of representatives of the irreducible representations of $G$.\\
\textbf{Problem:} For each $n$, how does $\mathcal{H}_n(V)$ decompose?  That is, given $\mu \in \widehat G$, what is the multiplicity of $F_\mu$ inside $\mathcal{H}_{n}$, denoted
\begin{equation*}
    \text{dim Hom}( F_\mu, \mathcal{H}_n(V) ) = \mbox{ ?}
\end{equation*}
Returning to the cyclic quiver above, the $K$-harmonic functions on $\mathfrak{p}$ form a graded representation of $K$:
\begin{equation*}
    \mathcal{H}(\mathfrak{p}) = \bigoplus_{n=0}^\infty \mathcal{H}_n(\mathfrak{p})
\end{equation*}
The fact that the polynomial functions are a free module over the invariants is a consequence of the Vinberg theory of $\theta$-groups \cite{V76}.  Yet, the literature on quivers does not seem to address the structure of the associated harmonic polynomials.

\subsection{Background from some existing literature}\label{subsection:background-literature}

As a representation of $K$, the harmonics are equivalent to an induced representation.  For details, see Chapter 3 of \emph{Geometric invariant theory over the real and complex numbers} \cite{W2017}. Alternatively, see \emph{An Analogue of the Kostant-Rallis Multiplicity Theorem for $\theta$-group Harmonics} \cite{W17}, which also describes the multiplicities, but ignoring the gradation.

The standard results concerning spherical harmonics on $\mathbb R^3$ were generalized by Kostant in \emph{Lie group representations on polynomial rings} \cite{K63}.  This is Kostant's most often cited paper. Among its many results, it establishes that $\C[\mathfrak{g}]$ is a free module over $\C[\mathfrak{g}]^G$ for a connected reductive group $G$.

To a combinatorialist, it is natural to consider the polynomial defined by the series
\begin{equation*}
    p_\mu(q) = \sum_{n=0}^\infty \text{dim Hom}( F_\mu, \mathcal{H}_n(V) ) \,\, q^n.
\end{equation*}
In the case addressed by Kostant, $V = \mathfrak{g}$, these polynomials extract deep information in representation theory.  For starters, they are Kazhdan-Lusztig polynomials for the affine Weyl group \cite{K82}.  Outside of Kostant's setting, very little is known about them.

In the case that $\mathfrak{g}$ is of Lie type A, then $p_\mu (q)$ was studied by Stanley in \cite{S84}.  Later on, connections with Hall-Littlewood polynomials were made \cite{M95}.  Even combinatorial interpretations for their coefficients are known.  An alternating sum formula was found by Hesselink in \cite{H80}.

In 1971, Kostant and Rallis obtained a generalization applying to symmetric pairs $(G,K)$ \cite{KR71}. That is, $K$ is the fixed point set of a regular involution on a connected reductive group $G$.  A natural way to generalize is to consider $K$ that are fixed by automorphisms of order larger than two.  Exactly this was done by Vinberg in his 1976 theory of $\theta$-groups, published as \emph{The Weyl group of a graded Lie algebra} \cite{V76}.  Since then, an enormous amount of work has been done on $\theta$-groups, but the analog of the graded structure of harmonic polynomials still does not exist. Taking $\theta:G \rightarrow G$ to be the identity automorphism on $G=SO_3$ we have $K=G$ acting on its Lie algebra $\mathfrak{g} \cong \R^3$ and recover the spherical harmonics example.

\section{Preliminary setup and main results}\label{section:setup-notation-and-main-results}\color{black}

In this section we setup notation, precisely state the main Theorems \ref{thm:harmonics-character} and \ref{thm:geometric-multiplicity}, proving Theorem \ref{thm:geometric-multiplicity} as a corollary of Theorem \ref{thm:harmonics-character}. In Section \ref{sec:proofs} we complete the proof of Theorem \ref{thm:harmonics-character}.

Throughout the paper we use the notation $[r] = \{1,2,\dots,r\}$ for any $r \in \N_{>0}$, where $\N = \{0,1,2,\dots\}$. For $k \in \Z$ we also use $[k]_2 = 0$ if $k$ is even and $[k]_2 = 1$ if $k$ is odd. If $n \in \N^r$, we use indices mod $r$ with representatives from $[r]$ so that, for example, the expression $n_i - n_{i-1}$ equals $n_1 - n_r$ when $i=1$, and $n_i$ denotes the $i$th component of $n \in \N^r$ as usual. We also denote the all-ones vector as $e = (1,\dots,1) \in \N^r$.

Let $\C_k$ denote the irreducible representation of $\C^\times$ on $\C$ given by $v \mapsto w^k v$ for $w \in \C^\times, v \in \C$ and $k \in \Z$. Let $F_k$ denote the irreducible representation of $SL_2(\C)$ on $\text{Sym}^k(\C^2)$ for $k \in \N$. For $z \in \Z^k$ and $s \in \N^\ell$ let $F_{z,s}$ denote the outer tensor product representation
\begin{equation}\label{eqn:Ktilde-irreps}
    F_{z,s} = \C_{z_1} \otimes \cdots \otimes \C_{z_k} \otimes F_{s_1} \otimes \cdots \otimes F_{s_\ell}
\end{equation}
of $(\C^\times)^k \times SL_2^\ell$, which is irreducible by \cite[Proposition 4.2.5]{GoodmanWallach}. Let $\diag(a_1,\dots,a_k)$ denote the diagonal matrix with entries $a_1,\dots,a_k$, and recall that for $w \in \C^\times$, $\diag(w, w^{-1}) \in SL_2$ acts on $F_k$ by $\diag(w^k, w^{k-2}, \dots, w^{-k})$ under a suitable choice of basis.

\color{black}
We now describe the infinite family of representations $V=V_r$ for which Theorems \ref{thm:harmonics-character} and \ref{thm:geometric-multiplicity} apply. For each $r \in \N_{>1}$, let $G=GL_{2r}$ or $G=SL_{2r}$. The results for the two groups will differ only slightly, and we will point out these differences as they arise. If $\theta:G\to G$ is any inner automorphism of order $r$, then $\theta(g) = hgh^{-1}$ for some $h \in G$. Since $\theta^r=id$, then $h^r g h^{-r} = g, \, \forall g$, and $h^r = \lambda I$ for some $\lambda \in \C^\times$. Since we are interested in the fixed point subgroup $K = G^\theta = \{ k \in G: hkh^{-1} = k \}$ we can take $\lambda = 1$ without loss of generality. Then the eigenvalues of $h$ are the $r$th roots of unity appearing with some multiplicities. Each eigenvalue of multiplicity $1$ will produce a factor of $\C^\times$ in $K=G^\theta$ and so this article deals with the first interesting case, where all multiplicities are $2$.

Therefore we take $\theta(g) = hgh^{-1}$ where $h$ is the diagonal matrix whose eigenvalues are the $r$th roots of unity, each appearing with multiplicity $2$. The fixed point subgroup $K = G^\theta$ consists of the block-diagonal subgroup of $G$
\begin{equation*}
    K = \underbrace{GL_2 \times \cdots \times GL_2}_{r \text{ factors}} \hspace{1cm} \text{ or } \hspace{1cm} K = S\big( \underbrace{GL_2 \times \cdots \times GL_2}_{r \text{ factors}} \big),
\end{equation*}
where the $S$ indicates requiring overall determinant $1$, depending on if $G=GL_{2r}$ or $G=SL_{2r}$, respectively. \color{black} In other words, for $r \in \N_{>1}$, we consider $K = GL_2^r$ or $K = S(GL_2^r)$, which is the set of all tuples $(g_1,\dots,g_r)$ with $g_j \in GL_2$ for all $j \in [r]$ but $\prod_j \det g_j = 1$ in the case $K=S(GL_2^r)$. \color{black}

The Lie algebra $\mathfrak{g} = \text{Lie}(G)$ breaks up as $\mathfrak{g} = \oplus \mathfrak{g}_k$ where $\mathfrak{g}_k$ is the $e^{2\pi i k/r}$ eigenspace of the differential $d\theta_1:\mathfrak{g}\to \mathfrak{g}$. Then $K$ acts on each eigenspace by restricting the Adjoint action of $G$ on $\mathfrak{g}$. For the eigenspace $\text{Lie}(K) = \mathfrak{g}_0$, then $K$ acts on its Lie algebra and we have the original setting of Kostant in \cite{K63}. \color{black} For each $r \in \N_{>1}$, let $V = V_r = \mathfrak{g}_1$, the $e^{2\pi i/r}$ eigenspace. \color{black} As can easily be checked, $V = \oplus_{j \in [r]} M_j$ where each $M_j \simeq \text{Hom}(\C^2, \C^2)$ and the action of $g=(g_1,\dots,g_r) \in K = G^\theta$ on $(X_1,\dots,X_r) \in V$ is given by
\begin{equation}\label{eqn:action-by-cyclic-conjugation}
    X_i \mapsto g_i X_i g_{i+1}^{-1}
\end{equation}
where indices are taken mod $r$ from $[r] = \{1,2,\dots,r\}$. This recovers the action on $\mathfrak{p}$, the representations of the cyclic quiver described in the introduction.

\color{black}
Let $\C[V]$ be the ring of polynomial functions on the $e^{2\pi i/r}$ eigenspace $V = \mathfrak{g}_1$. Since $K$ acts on $V$, then $K$ acts on $\C[V]$ by $gf(x) = f(g^{-1}x)$ for $g \in K, f \in \C[V]$. Since these examples fall under Vinberg's theory \cite{V76} we have
\begin{equation*}
    \C[V] = \C[V]^K \otimes \mathcal{H}.
\end{equation*}
By a result in \cite{LP90} we know that $\C[V]^K$ is a polynomial algebra generated by $tr(X_1 X_2 \cdots X_r)$ and $tr((X_1 X_2 \cdots X_r)^2)$, a fact we will use to prove Theorem \ref{thm:harmonics-character}. 

Since $V=\oplus_{j \in [r]} M_j$ for $M_j \simeq \text{Hom}(\C^2,\C^2)$ there is a natural multigradation
\begin{equation*}
    \C[V] = \bigoplus_{n \in \N^r} \C[V]_n
\end{equation*}
by multihomogeneous degree $n \in \N^r$, where the polynomials in $\C[V]_n$ have homogeneous degree $n_i$ in the variables associated with $M_i$. With $\Harm_n = \Harm \cap \C[V]_n$ we have a direct sum of $K$ representations
\begin{equation*}
    \mathcal{H} = \bigoplus_{n \in \N^r} \mathcal{H}_n.
\end{equation*}
\color{black}
Theorem \ref{thm:harmonics-character} will decompose $\Harm_n$ into irreducible representations of $K$, while Theorem \ref{thm:geometric-multiplicity} will describe the multiplicity of each irreducible representation in $\Harm_n$ by counting integral points on certain polyhedra.

\color{black}
First we describe the irreducible representations of $K$. If $K = S(GL_2^r)$, let $\widetilde{K} = (\C^\times)^{r-1} \times SL_2^r$ and let $\varphi:\widetilde{K} \to K$ be the surjective homomorphism
\begin{equation*}
    (w_1,\dots,w_{r-1},g_1,\dots,g_r) \mapsto (w_1g_1,\dots, w_{r-1}g_{r-1}, w_1^{-1}w_2^{-1}\cdots w_{r-1}^{-1} g_r)
\end{equation*}
where $w_i \in \C^\times$ for $i \in [r-1]$ and $g_i \in SL_2$ for $i \in [r]$. Then $N = \ker \varphi \simeq \{1,-1\}^{r-1}$ is a finite group with $2^{r-1}$ elements
\begin{equation*}
    \big\{ (w_1,\dots, w_{r-1}, \begin{bmatrix}
     w_1 & 0\\ 0 & w_1
    \end{bmatrix}, \dots, \begin{bmatrix}
     w_{r-1} & 0\\ 0 & w_{r-1}
    \end{bmatrix}, \begin{bmatrix}
     \prod w_i & 0\\ 0 & \prod w_i
    \end{bmatrix}) : w_i \in \{1,-1\}, i \in [r-1] \big\}.
\end{equation*}
If $K=GL_2^r$, let $\Tilde{K} = (\C^\times)^r \times SL_2^r$ and let $\varphi:\widetilde{K} \to K$ be the surjective homomorphism
\begin{equation*}
    (w_1,\dots,w_r,g_1,\dots,g_r) \mapsto (w_1g_1,\dots, w_{r}g_{r}).
\end{equation*}
Then $N = \ker \varphi \simeq \{1,-1\}^r$ is a finite group of $2^r$ elements
\begin{equation*}
    \big\{ (w_1,\dots, w_{r}, \begin{bmatrix}
     w_1 & 0\\ 0 & w_1
    \end{bmatrix}, \dots, \begin{bmatrix}
     w_{r} & 0\\ 0 & w_{r}
    \end{bmatrix}) : w_i \in \{1,-1\}, i \in [r] \big\}.
\end{equation*}

Every representation of $K$ yields a representation of $\Tilde{K}$ upon composition with $\varphi$, so the irreducible representations of $K$ are those irreducible representations of $\Tilde{K}$ which factor through $\varphi$, so that all elements of the finite group $N = \ker \varphi$ act by the identity transformation. If the representation $(\sigma_{z,s}, F_{z,s})$ of $\Tilde{K}$ has the property that $\sigma_{z,s}(w) = I$ for every $w \in N$, then $(\sigma_{z,s}, F_{z,s})$ is also a representation of $K \simeq \Tilde{K}/N$ and the coset $kN \in K$ acts by $\sigma_{z,s}(k):F_{z,s} \to F_{z,s}$ for $k \in \Tilde{K}$.

\begin{proposition}\label{prop:parametrize-irreps-mod-2} \hspace{1cm}
\begin{enumerate}
    \item Up to isomorphism, the irreducible representations of $K = GL_2^r$ are the $F_{z,s}$ of Equation (\ref{eqn:Ktilde-irreps}) for $z \in \Z^r$ and $s \in \N^r$, subject to the condition that $z_j + s_j$ is even for all $j \in [r]$.
    \item Up to isomorphism, the irreducible representations of $K = S(GL_2^r)$ are the $F_{z,s}$ of Equation (\ref{eqn:Ktilde-irreps}) for $z \in \Z^r$ and $s \in \N^r$, subject to the condition that $z_j + s_j + s_r$ is even for all $j \in [r]$.
\end{enumerate}
\end{proposition}

\begin{proof}
First consider $K = S(GL_2^r)$. Let $\sigma_{z,s}(w):F_{z,s} \to F_{z,s}$ denote the representation of $w \in N$ on $F_{z,s}$ of Equation (\ref{eqn:Ktilde-irreps}). Recall that $\diag(a, a^{-1}) \in SL_2$ acts on $F_k$ by $\diag(a^k, a^{k-2}, \dots, a^{-k})$ under a suitable choice of basis. If $a = \pm 1$, we have $\pm I$ depending on the parity of $k$. Since $w \in N$ has each $w_i = \pm 1$ for $i \in [r-1]$, we have 
\begin{align*}
    \sigma_{z,s}(w) &= \bigotimes_{j=1}^{r-1} w_j^{z_j} I \otimes \bigotimes_{j=1}^{r-1} w_j^{s_j} I \otimes \left(\prod_{j=1}^{r-1} w_j \right)^{s_r} I \\
    &= \prod_{j=1}^{r-1} w_j^{z_j + s_j + s_r} \,\, \bigotimes_{j=1}^{2r - 1} I,
\end{align*}
where $I$ is the identity operator on the corresponding factor of the tensor product $F_{z,s}$ in Equation (\ref{eqn:Ktilde-irreps}). Thus, if $z_j + s_j + s_r$ is even, then $\sigma_{z,s}(w)$ acts as the identity for all $w \in N$. Conversely, if $\sigma_{z,s}(w)$ acts as the identity for all $w \in N$, then taking $w$ as the element with $-1$ in the $j$th coordinate and ones elsewhere, we see that $z_j + s_j + s_r$ must be even from the formula above.

Now consider $K = GL_2^r$. In that case,
\begin{align*}
    \sigma_{z,s}(w) &= \bigotimes_{j=1}^{r} w_j^{z_j} I \otimes \bigotimes_{j=1}^{r} w_j^{s_j} I \\
    &= \prod_{j=1}^{r} w_j^{z_j + s_j} \,\, \bigotimes_{j=1}^{2r} I,
\end{align*}
and we see that $\sigma_{z,s}(w)$ is the identity for every $w \in N$ exactly when $z_j + s_j$ is even for all $j \in [r]$, similarly.
\end{proof}
\color{black}

Recall that $K$ acts on $V$ by Equation (\ref{eqn:action-by-cyclic-conjugation}), and hence acts on the $K$-harmonic polynomial functions $\mathcal{H} \subset \C[V]$, with each multigraded component $\Harm_n$ an invariant subspace, for $n \in \N^r$. We can now state our first main result.

\begin{theorem}\label{thm:harmonics-character}
Let $G = SL_{2r}$ and $K = G^\theta = S(GL_2^r)$.
For any $n \in \N^r$, let $z \in \Z^{r-1}$ have components $z_i = n_i - n_{i-1} - n_r + n_{r-1}$ for $i \in [r-1]$. Then the multigraded component $\mathcal{H}_n$ decomposes into irreducible representations of $K$ as
\begin{align}\label{eq:harmonics-character}
    \mathcal{H}_n = \bigoplus_{m \in \partial \Lambda_n} \bigoplus_{s \in \partial \lambda_m} F_{z,s}
\end{align}
where $\Lambda_n$ and $\lambda_m$ denote the sets
\begin{align*}
    \Lambda_n &= \prod_{i \in [r]} \{ n_i, n_i - 2, \dots, [n_i]_2 \} \subset \N^r \text{ and }\\
    \lambda_m &= \prod_{i \in [r]} \{ m_i + m_{i-1}, m_i + m_{i-1} - 2, \dots, |m_i - m_{i+1}| \} \subset \N^r,
\end{align*}
while $\partial \Lambda_n = \{ x \in \Lambda_n : x + 2e \notin \Lambda_n \}$ and $\partial \lambda_m = \{ x \in \lambda_m : x + 2e \notin \lambda_m \}$.

If instead $G=GL_{2r}$ and $K = GL_2^r$, then for $n \in \N^r$ let $z \in \Z^r$ have components $z_i = n_i - n_{i-1}$ for $i \in [r]$. In that case, Equation (\ref{eq:harmonics-character}) again records the decomposition of $\Harm_n$ into irreducible representations of $K$.
\end{theorem}

We will prove Theorem \ref{thm:harmonics-character} in Section \ref{sec:proofs}. In the remainder of this section, we give an example of the sets $\Lambda_n, \lambda_m, \partial \Lambda_n, \partial \lambda_m$, and then derive Theorem \ref{thm:geometric-multiplicity} as a consequence of Theorem~\ref{thm:harmonics-character}. \color{black}
With $r=3$, $n=(3,2,3)$, and $m=(3,2,1)$ we have
\begin{align*}
    \nonumber \Lambda_n &= \{ 3,1 \} \times \{ 2,0 \} \times \{ 3,1 \} \\
     &= \{ (3,2,3),(3,2,1),(3,0,3),(3,0,1),(1,2,3),(1,2,1),(1,0,3),(1,0,1) \} \\
    \lambda_m &= \{4,2 \} \times \{5,3,1 \} \times \{3,1 \} \\
     &= \big\{ (4,5,3), (4,5,1), (4,3,3), (4,3,1), (4,1,3), (4,1,1),\\
     &\hspace{9mm} (2,5,3), (2,5,1), (2,3,3), (2,3,1), (2,1,3), (2,1,1) \big\}\\
    \partial \Lambda_n &= \Lambda_n \setminus \{ (1,0,1) \} \\
    \partial \lambda_m &= \lambda_m \setminus \{ (2,3,1), (2,1,1) \}
\end{align*}
\color{black} Notice that each $F_{z,s}$ appearing in the decomposition (\ref{eq:harmonics-character}) satisfies the criteria in Proposition \ref{prop:parametrize-irreps-mod-2}. We check this explicitly, and what follows applies for all $j \in [r]$. For $a,b \in \Z$ let $a \equiv b$ if $a$ and $b$ are both even, or both odd. In other words, $a \equiv b$ denotes equivalence mod $2$, so that $a$ and $b$ have the same parity. Since $s \in \partial \lambda_m$ implies that $s_j \equiv m_j + m_{j-1}$ and $m \in \partial \Lambda_n$ implies that $m_j \equiv n_j$, we see that $s_j \equiv n_j + n_{j-1}$. For $K = GL_2^r$, Theorem \ref{thm:harmonics-character} states that $z_j = n_j - n_{j-1}$. Thus $z_j \equiv n_j + n_{j-1} \equiv s_j$ and $s_j$ and $z_j$ have the same parity, and hence their sum is even, as required by Proposition \ref{prop:parametrize-irreps-mod-2}. For $K = S(GL_2^r)$, Theorem \ref{thm:harmonics-character} states that $z_j = n_j - n_{j-1} - n_r + n_{r-1} \equiv n_j + n_{j-1} + n_r + n_{r-1}$. But then $z_j + s_j + s_r \equiv 2n_j + 2n_{j-1} + 2n_r + 2n_{r-1} \equiv 0$, as required by Proposition \ref{prop:parametrize-irreps-mod-2}.

\color{black}
\begin{corollary}\label{cor:necessary-conditions}
For $(z,s) \in \Z^{r-1} \times \N^r$ and $n \in \N^r$,
let $b(z) \in \Z^r$ be the vector with entries $b_r = 0$ and $b_k = \sum_{i \in [k]} z_i - \frac{k}{r} \sum_{i \in [r-1]} z_i$ for $k \in [r-1]$. With $K=S(GL_2^r)$ we have
\begin{equation*}
    \text{dim Hom}_K (F_{z,s}, \mathcal{H}_n) > 0 \hspace{1cm} \implies \hspace{1cm} \begin{array}{l}
        \sum_{i \in [r-1]} z_i \equiv 0 \text{ mod r}, \\
        n \in b(z) + \N e, \\
        n_i + n_{i-1} \equiv s_i \text{ mod 2}, \forall i \in [r].
    \end{array}
\end{equation*}
For $K=GL_2^r$, $\text{dim Hom}_K (F_{z,s}, \mathcal{H}_n) > 0$ implies that $\sum_{i \in [r]} z_i = 0$, that $n_i + n_{i-1} \equiv s_i \text{ mod 2}$ for all $i \in [r]$, and that $n \in b(z) + \N e$ if we take $b(z) = (z_1,\dots,z_{r-1},0)$ instead.
\end{corollary}

\begin{proof}
To prove that $\sum_{i \in [r-1]} z_i = 0 \text{ mod r}$, we use that $z_i = n_i - n_{i-1} - n_r + n_{r-1}$ for all $F_{z,s}$ appearing in the decomposition of Theorem \ref{thm:harmonics-character}, and note that the sum partially telescopes, leaving $rn_{r-1} - r n_r$ which is zero mod r. To prove that $n_i + n_{i-1} = s_i \text{ mod 2}$ observe that $s \in \partial \lambda_m \implies s \in \lambda_m$ which implies that $s_i = m_i + m_{i-1} - 2\ell_i$ for some $\ell_i \in \mathbb{N}$. But since $m \in \partial \Lambda_n \implies m \in \Lambda_n$ then each $m_j = n_j - 2k_j$ for some $k_j \in \mathbb{N}$. Putting these together and taking equalities mod 2 yields the result. The fact that $n \in b(z) + \N e$ follows by solving the underdetermined linear system of equations $z_i = n_i - n_{i-1} - n_r + n_{r-1}$, solving for the $n_i, \, i \in [r]$ in terms of the $z_j, \, j \in [r-1]$ and taking $n_r$ as the free variable. The case $K = GL_2^r$ is similar.
\end{proof}

The second main result, Theorem \ref{thm:geometric-multiplicity}, describes the multiplicities of $F_{z,s}$ in $\mathcal{H}_n$ by counting integer points on certain polytopes. First, for $n \in \N^r$, denote by $P_n$ the polyhedron
\begin{equation*}
    P_n = \{ x \in \R^r \,\, : \,\, x_i \leq n_i, \,\, \forall i \in [r] \}.
\end{equation*}
Next, for $s \in \N^r$, denote by $Q_s$ the polyhedron
\begin{equation*}
    Q_s = \left\{ x \in \R^r \,\, : \,\, \begin{array}{c}
        \forall i \in [r]\\
        x_{i-1} + x_i - s_i \geq 0 \\
        x_{i-1} - x_i + s_i \geq 0 \\
        -x_{i-1} + x_i + s_i \geq 0
    \end{array}  \right\}.
\end{equation*}
Now, for any vector $v \in \R^r$ and any polyhedron $P \subset \R^r$, let $\partial_v P$ denote the faces of $P$ for which movement by any positive amount in the direction $v$ leaves the polyhedron. In particular, for $e = (1,1,\dots,1) \in \R^r$ we describe $\partial_e P_n$ and $\partial_{-e} Q_s$. For an affine-linear map $f:\R^r \to \R$, let $H_f$ denote the hyperplane defined by $f(x) = 0$, and for $i \in [r]$ let $f_i(x) = x_{i-1} + x_i - s_i$ and $g_i(x) = x_i - n_i$. Then we have
\begin{equation*}
    \partial_e P_n = \bigcup_{i \in [r]} H_{g_i} \cap P_n \hspace{1cm} \text{and} \hspace{1cm}
    \partial_{-e} Q_s = \bigcup_{i \in [r]} H_{f_i} \cap Q_s.
\end{equation*}
Intuitively, $\partial_e P_n$ is the union of the $r$ ``upper'' faces of $P_n$ while $\partial_{-e} Q_s$ is the union of the $r$ ``lower'' faces of $Q_s$, as measured by the direction $e \in \R^r$.  
See Figure \ref{fig:r=3-example-multiplicity-six} for an example of the intersection $\partial_e P_n \cap \partial_{-e} Q_s$.

\begin{figure}[h]
    \centering
    \includegraphics[width=0.45\textwidth]{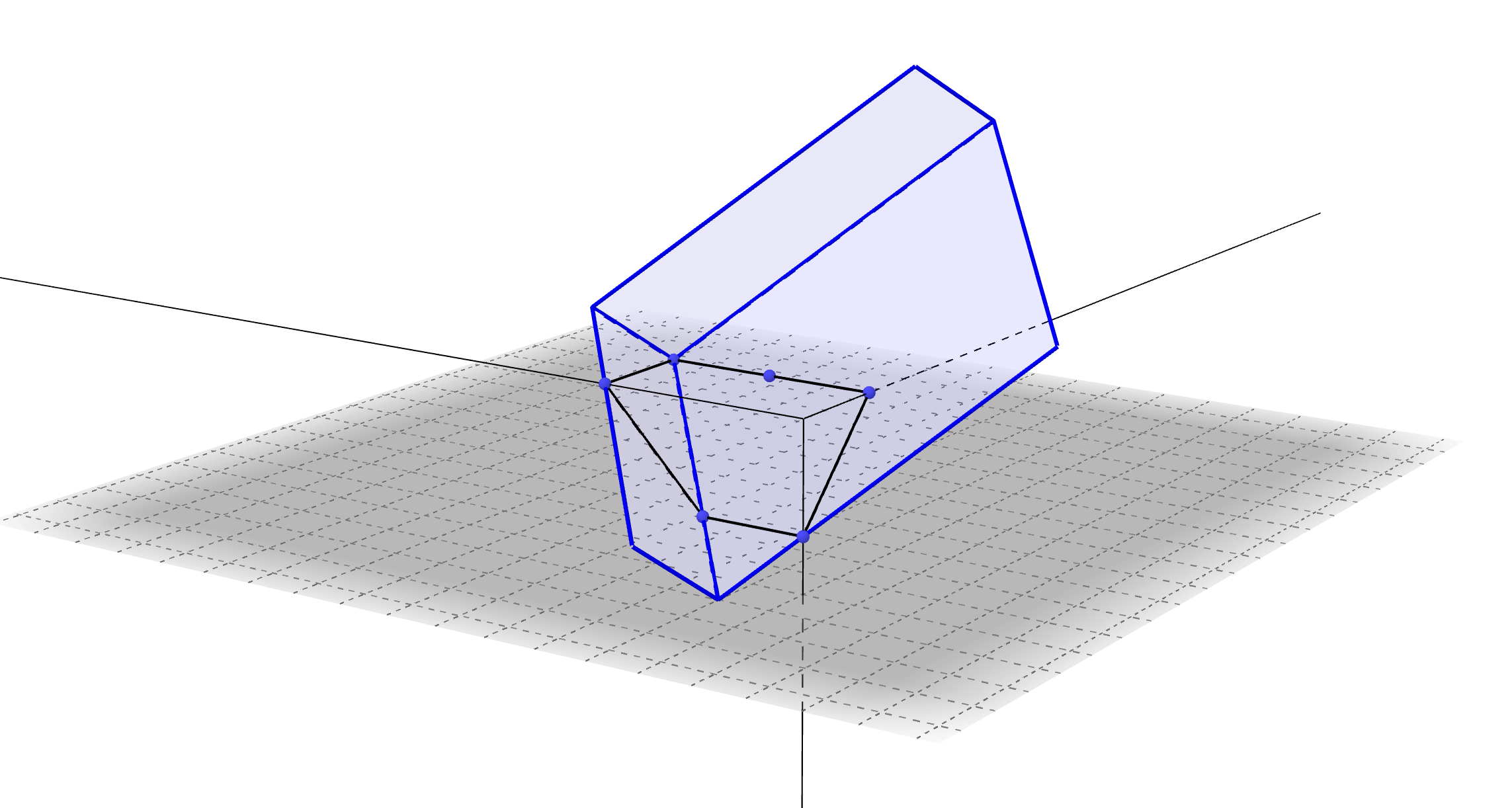}
    \includegraphics[width=0.45\textwidth]{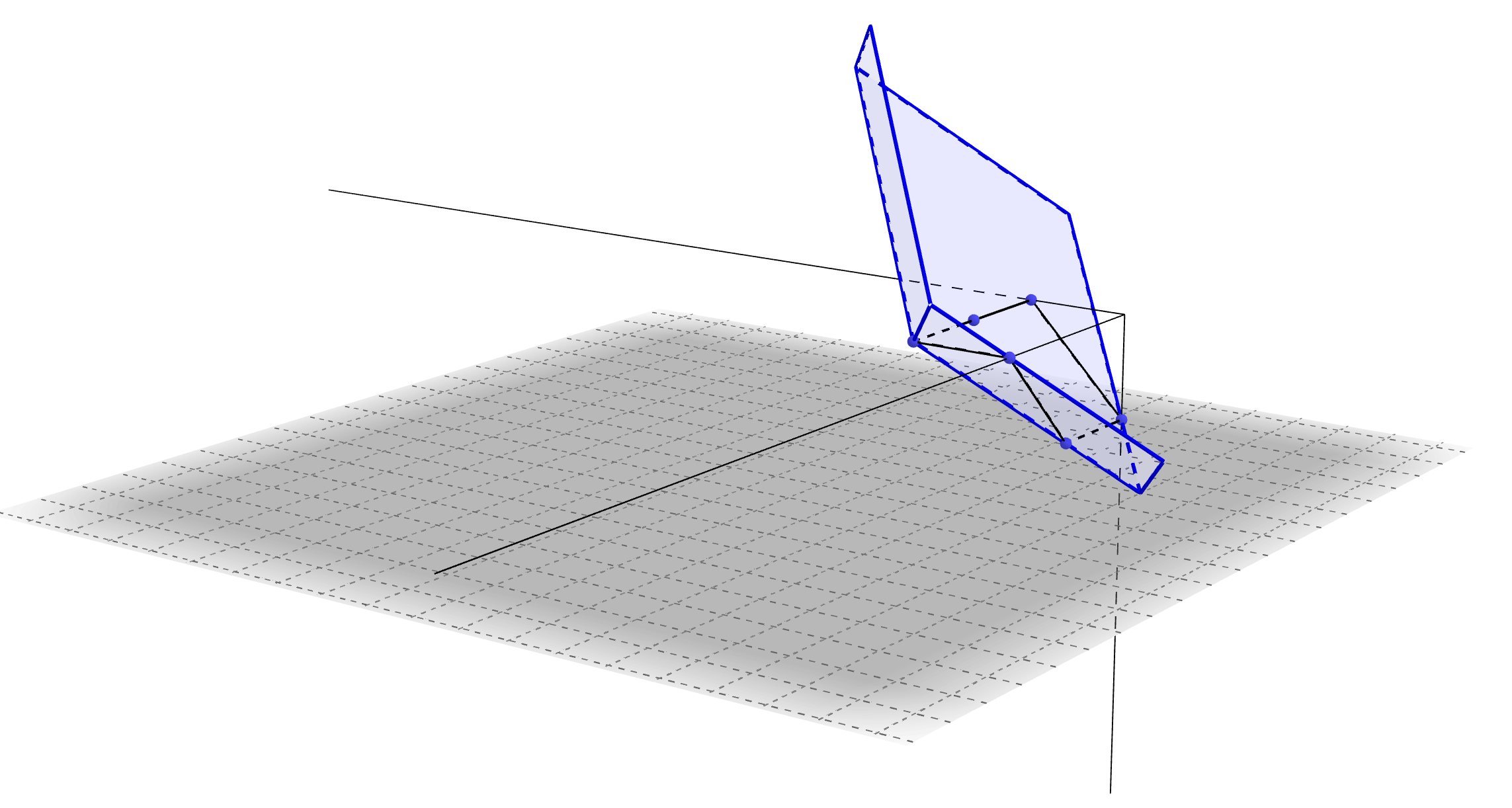}
    \caption{$\text{dim Hom}_K(F_{z,s},\mathcal{H}_n) = 6$ for $r=3$, $n=(6,5,3)$, $s = (7,5,4)$, and $z=(5,1)$.}
    \label{fig:r=3-example-multiplicity-six}
\end{figure}

\begin{theorem}\label{thm:geometric-multiplicity}
Let $K=GL_2^r$ or $K=S(GL_2^r)$. If $\text{dim Hom}_K (F_{z,s}, \mathcal{H}_n) > 0$, then
\begin{equation}\label{eq:count-points-on-polyhedra}
    \text{dim Hom}_K (F_{z,s}, \mathcal{H}_n)  = \# \left\{ m \in \N^r \,\, : \,\, \begin{array}{l}
        m \in \partial_e P_n \cap \partial_{-e} Q_s \\
        m_i = n_i \text{ mod 2}, \forall i \in [r] 
    \end{array} \right\}.
\end{equation}
In words, the multiplicity of $F_{z,s}$ in $\mathcal{H}_n$ is equal to the number of integer points on the intersection of the upper and lower faces of the polyhedra $P_n$ and $Q_s$, subject to a mod 2 condition.
\end{theorem}

\begin{proof}
Using Theorem \ref{thm:harmonics-character}, we need only verify that the set $A = \{ m \in \partial \Lambda_n : s \in \partial \lambda_m \}$ is equal to the set defined in the right side of (\ref{eq:count-points-on-polyhedra}), call it $B$. First, let $m \in A$ so that $m \in \partial \Lambda_n$ with $s \in \partial \lambda_m$. Since $m \in \partial \Lambda_n \subset \Lambda_n$ then $m_i = n_i \text{ mod 2}$ for all $i \in [r]$, and there exists an index $\ell \in [r]$ with $m_\ell = n_\ell$, so that also $m \in \partial_e P_n$. If we can show that $m \in \partial_{-e}Q_s$ then we have verified that $A \subset B$. Since $s \in \partial \lambda_m \subset \lambda_m$, each $s_i \in \{m_{i-1} + m_i, m_{i-1} + m_i - 2, \dots, |m_{i-1} - m_i| \}$ then $s_i \geq |m_{i-1} - m_i|$ which means that $m_{i-1} - m_i + s_i \geq 0$ and $-m_{i-1} + m_i + s_i \geq 0$. Since $s \in \partial \lambda_m$ we know there exists an index $k \in [r]$ such that $m_{k-1} + m_k - s_k = 0$. But then $m \in H_{f_k} \cap Q_s$, so that also $m \in \partial_{-e} Q_s$, and~$A \subset B$.

To finish, we need to show that $B \subset A$. Let $m \in B$ so that $m \in \N^r$ with $m_i = n_i \text{ mod 2}$ for all $i \in [r]$ and $m \in \partial_e P_n \cap \partial_{-e} Q_s$. Since $m \in P_n$ then $m_i \leq n_i$ for all $i \in [r]$ and since $m \in \partial_e P_n$ then there exists an index $\ell$ such that $m_\ell = n_\ell$. Therefore we have $m \in \partial \Lambda_n$. Showing $s \in \partial \lambda_m$ will complete the proof. Since $m \in Q_s$ then for all $i \in [r]$ we have $s_i \leq m_{i-1} + m_i$, $s_i \geq m_{i-1} - m_i$, and $s_i \geq m_i - m_{i-1}$. Since $m \in \partial_{-e} Q_s$ there exists an index $k \in [r]$ such that $m \in H_{f_k}$ and $m_{k-1} + m_k = s_k$. Finally, since $\text{dim Hom}_K(F_{z,s}, \mathcal{H}_n) > 0$, Corollary \ref{cor:necessary-conditions} implies that $n_{i-1} + n_i = s_i \text{ mod 2}$, but since $n_i = m_i \text{ mod 2}$ then also $m_{i-1} + m_i = s_i \text{ mod 2}$, so that $s \in \lambda_m$ and hence $s \in \partial \lambda_m$. Thus $m \in A$, completing the proof.
\end{proof}

\begin{corollary}
For any fixed $F_{z,s}$,
\begin{equation*}
    \text{dim Hom}_K(F_{z,s}, \mathcal{H}) = \sum_{n \in \N^r} \text{dim Hom}_K(F_{z,s}, \mathcal{H}_n) < \infty.
\end{equation*}
\end{corollary}
\begin{proof}
This result follows from the general theory and was calculated in \cite{W17} using an induced representation. We simply note that the geometry in Theorem \ref{thm:geometric-multiplicity} confirms it. As $n$ moves up along the ray $b(z) + \N e$, eventually $\partial_{-e} Q_s \subset \text{int} P_n$, and hence the intersection with $\partial_e P_n$ is empty past that point.
\end{proof}

\color{black}

\section{Proof of Theorem \ref{thm:harmonics-character}}\label{sec:proofs}

In this section we prove Theorem \ref{thm:harmonics-character}. We first examine $\C[V]_n$ and $\Harm_n$ as representations of $\Tilde{K}$, and then observe that in the irreducible representations that appear, $N = \ker \varphi$ acts by the identity (recall the discussion immediately following Theorem \ref{thm:harmonics-character}). Recall $\Tilde{K} = (\C^\times)^\ell \times SL_2^r$ for $\ell=r-1$ or $\ell=r$. Let $Z^\ell \simeq (\C^\times)^\ell$ denote the subgroup of $\Tilde{K}$ obtained by choosing the identity element in each factor of $SL_2$. Let $T^r$ denote the subgroup isomorphic to $(\C^\times)^r$
\begin{equation*}
    \{ u = \left(\diag(u_1,u_1^{-1}), \dots, \diag(u_r,u_r^{-1}) \right) \,\, : \,\, u_j \in \C^\times, \forall j \in [r] \}
\end{equation*}
obtained by choosing $1 \in \C^\times$ in each of the factors of $(\C^\times)^\ell$ and diagonal matrices in the $SL_2^r$ factors. Notice that $T^r$ may also be identified with $\varphi(T^r)$ as a subgroup of $K$ in both cases. We will call the elements $u \in T^r$, where $u = \left(\diag(u_1,u_1^{-1}), \dots, \diag(u_r,u_r^{-1}) \right)$, but also refer to the components $u_i$ for $i \in [r]$. \color{black} We first examine the action of $Z^\ell$ on $\C[V]_n$. \color{black}

\begin{proposition}\label{prop:central-characters}
For $n \in \N^r$, consider $\C[V]_n$ as a representation of $\Tilde{K}$. The action of $w \in Z^r$ is given by $f \mapsto \left(\prod_{i \in [r]} w_i^{n_i - n_{i-1}}\right) f$ for all $f \in \C[V]_n$, where $Z^r$ is the subgroup of $\Tilde{K} = (\C^\times)^r \times SL_2^r$. The action of $w \in Z^{r-1}$ is given by $f \mapsto \left( \prod_{i \in [r-1]} w_i^{n_i - n_{i-1} - n_r + n_{r-1}} \right) f$ for all $f \in \C[V]_n$, where $Z^{r-1}$ is the subgroup of $\Tilde{K} = (\C^\times)^{r-1} \times SL_2^r$.
\end{proposition}

\begin{proof}
This follows from Equation (\ref{eqn:action-by-cyclic-conjugation}) which says that $X_i \mapsto g_i X_i g_{i+1}^{-1}$ for each $(X_1,\dots,X_r) \in V$. Note that because of the formula $f(g^{-1}x)$ the inverse switches places for the action on $\C[V]$. By definition of $\varphi:\Tilde{K} \to K$, the only change from $Z^r$ to $Z^{r-1}$ is that we replace $w_r$ with $w_1^{-1}w_2^{-1}\cdots w_{r-1}^{-1}$ which multiplies by additional $w_i^{n_{r-1} - n_r}$ factors.
\end{proof}

We now focus on the action of the subgroup $T^r = \varphi(T^r) \subset K$, which is the same in both cases $K=GL_2^r$ and $K=S(GL_2^r)$. For a representation $\rho$ of $T^r$ on a vector space $W$, let $\chi(W)$ denote $tr(\rho(u))$, which we call the character of $T^r$ on $W$ and is a function of the variables $u_1,\dots,u_r$ from $u \in T^r$. In other words, we have $\chi(W):T^r \to \C$. For $\ell \in \N$ and $j \in [r]$ let $\chi_\ell(u_j)$ denote the expression $\chi_\ell(u_j) = u_j^\ell + u_j^{\ell - 2} + \cdots + u_j^{-\ell}$, which will appear frequently. Recall \color{black} that $Z^\ell \times T^r$ is a maximal torus in $\Tilde{K} = (\C^\times)^\ell \times SL_2^r$ and any completely reducible representation of $\Tilde{K}$ on $W$ is determined up to isomorphism by $\chi(W)$, see \cite[page 188]{GoodmanWallach}.

\begin{proposition}\label{prop:invariants}\color{black} For $n \in \N^r$, let $n_{min} = \text{min}\{ n_i : i \in [r]\}$ and $e = (1,\dots,1) \in \N^r$. For $\rho$ a representation of $T^r$ and $W$ a subrepresentation, let $\chi(W) = tr(\rho(u)|_W)$ for $u \in T^r$. Restrict the representations of $K$ on $\C[V]_n$ and $\Harm_m$ to $T^r$, for each $n,m \in \N^r$. \color{black} Then for all $n \in \N^r$ we have
\begin{equation}\label{eqn:invariant-march}
    \chi(\C[V]_n) = \sum_{j\in \{0,1,\dots,n_{min}\}} \big( \lfloor j/2 \rfloor + 1 \big) \chi(\mathcal{H}_{n-je}).
\end{equation}
If $n_{min}$ is large, the formula begins
\begin{multline*}
    \chi(\C[V]_{n}) = \chi(\Harm_{n}) + \chi(\Harm_{n-e}) + 2\chi(\Harm_{n-2e}) + 2\chi(\Harm_{n-3e})\\
    + 3\chi(\Harm_{n-4e}) + 3\chi(\Harm_{n-5e}) + 4\chi(\Harm_{n-6e}) + 4\chi(\Harm_{n-7e}) + \dots 
\end{multline*}
\end{proposition}

\begin{proof}
By a result from \cite{LP90} we know that the ring of invariants $\C[V]^K$ is a polynomial algebra generated by $f=tr(X_1 X_2 \cdots X_r)$ and $g=tr((X_1 X_2 \cdots X_r)^2)$ which have multidegree $e$ and $2e$ respectively, so that $\C[V]^K = \C[f,g]$. \color{black}
Using $q$ to encode the total degree in $\C[V]$ and $u$ to encode the total degree in $\mathcal{H}$, the formula $\C[V] = \C[V]^K \otimes \mathcal{H}$ becomes
\begin{equation*}
    \Phi(q,u) = \frac{1}{(1-q)(1-q^2)(1-uq)}
\end{equation*}
combinatorially. For $n \in \N^r$, let $\ell \in [r]$ be an index such that $n_\ell \leq n_j$ for all $j \neq \ell$. Then the coefficient of $q^{n_\ell}$ in the series expansion of $\Phi(q,u)$ is a polynomial in $u$ that records the multiplicities of $\chi(\Harm_{n-je})$ inside $\chi(\C[V]_n)$ from Equation (\ref{eqn:invariant-march}).

\color{black} We explain in more detail. Let $h_1,\dots,h_a$ be a basis of $\Harm_{n - e}$. Then $fh_1,\dots, fh_a$ span a subspace of $\C[V]_n$ whose character contributes a term identical to $\chi(\Harm_{n-e})$ to the character $\chi(\C[V]_n)$, since $f \in \C[V]^K$. Similarly, if $\{h_j\}$ is a basis for $\Harm_{n - 2e}$ then $\chi(\text{span}\{ f^2 h_j \}) = \chi(\Harm_{n - 2e})$ and $\chi(\text{span}\{ g h_j \}) = \chi(\Harm_{n - 2e})$ also appear as terms inside $\chi(\C[V]_n)$. Since $f$ and $g$ have evenly distributed multidegrees $e$ and $2e$, the only $\chi(\Harm_m)$ that appear as terms within $\chi(\C[V]_n)$ are those with multidegree $n,n-e,n-2e,n-3e,\dots,n-n_{min}$, with varying multiplicities. To count those multiplicities we use the generating function $\Phi(q,u)$, where
\begin{equation*}
    \Phi(q,u) = \frac{1}{(1-q)(1-q^2)(1-uq)}.
\end{equation*}
Expanding $\Phi(q,u)$ as a series in $q$, the coefficients are polynomials in $u$. For instance, the coefficient $u^{n_\ell}$ of $q^{n_\ell}$ in the series expansion of $1/(1-uq)$ counts the term $\chi(\Harm_n)$, which appears with multiplicity one in $\chi(\C[V]_n)$. Multiplying $1/(1-uq)$ by $1/(1-q) = 1 + q + q^2 + q^3 + \cdots$ will count the additional terms $\chi(\Harm_{n-je})$ that appear due to powers of $f,f^2,f^3,\dots$. Multiplying $1/(1-uq)$ by $1/((1-q^2)(1-q)) = (1 + q^2 + q^4 + \cdots)(1 + q + q^2 + q^3 + \cdots)$ counts the additional terms appearing due to $f,g,f^2,fg,g^2,f^3,f^2g,fg^2,g^3,\dots$. Therefore, the coefficient of $q^{n_\ell}$ in the series expansion of $\Phi(q,u)$ is a polynomial in $u$ recording the multiplicities of the various $\chi(\Harm_m)$ for $m = n, n-e, n-2e,\dots, n-n_{min}e$ appearing in $\chi(\C[V]_n)$, since $\C[V] = \C[V]^K \otimes \Harm = \C[f,g] \otimes \Harm$. 
In general, the coefficient of $u^{n_\ell - j}$ inside the coefficient of $q^{n_\ell}$ equals $\lfloor j/2 \rfloor + 1$. \color{black}
\end{proof}

\begin{proposition}\label{prop:long-character}
\color{black} For $k \in \Z$ let $[k]_2 = 0$ if $k$ is even, and $[k]_2=1$ if $k$ is odd. For $\ell \in \N$ and $j \in [r]$, let $\chi_\ell(u_j) = u_j^\ell + u_j^{\ell - 2} + \cdots + u_j^{-\ell}$. Then for $n \in \N^r$, \color{black} the character $\chi(\C[V]_n)$ is
\begin{equation*}
    \prod_{i \in [r]} \bigg[ \chi_{n_i}(u_i)\chi_{n_i}(u_{i+1}) + \chi_{n_i-2}(u_i)\chi_{n_i-2}(u_{i+1}) + \cdots + \chi_{[n_i]_2}(u_i)\chi_{[n_i]_2}(u_{i+1}) \bigg].
\end{equation*}
\end{proposition}

\begin{proof}\color{black}
For $n \in \N^r$ let $n_i e_i = (0, \dots, 0, n_i, 0, \dots, 0)$. The multigraded component $\C[V]_n$ is a tensor product $\C[V]_n = \bigotimes_{i \in [r]} \C[V]_{n_i e_i}$ by \cite[Proposition C.1.4]{GoodmanWallach}.
Therefore we first compute the character of $\C[V]_{n_i e_i}$ and the result will follow by taking the product over $i \in [r]$.

\color{black} From the decomposition $V=\oplus M_i$, the space $\C[V]_{n_i e_i}$ consists of polynomials on $M_i$ alone, with the action of a group element $(g_1,g_2,\dots,g_r) \in K$ given by Equation (\ref{eqn:action-by-cyclic-conjugation}). Therefore the action of $K$ on $\C[V]_{n_i e_i}$ depends only on $g_i \in GL_2$ and $g_{i+1} \in GL_2$, and is identical to the usual action of $GL_2 \times GL_2$ on two by two matrices. Applying the well-known \cite[Theorem 5.6.7]{GoodmanWallach} and then restricting to $T^r$ we find  
\begin{equation*}
    \chi(\C[V]_{n_i e_i}) = \chi_{n_i}(u_i)\chi_{n_i}(u_{i+1}) + \chi_{n_i-2}(u_i)\chi_{n_i-2}(u_{i+1}) + \cdots + \chi_{[n_i]_2}(u_i)\chi_{[n_i]_2}(u_{i+1}).
\end{equation*}
The result follows by taking the product over $i \in [r]$.
\end{proof}

\color{black}
\begin{proposition}
For any $n \in \N^r$ we have
\begin{equation*}
    \chi(\C[V]_n) = \sum_{m \in \Lambda_n} \prod_{i \in [r]} \chi_{m_{i-1}}(u_i) \chi_{m_i}(u_i).
\end{equation*}
\end{proposition}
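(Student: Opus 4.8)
The plan is to take the product formula for $\chi(\Poly_{\vv{n}})$ already established in Proposition \ref{prop:polysfullcharacter}, expand it, and regroup the resulting monomials of $SL_2$-characters by the toral variable they involve. This is exactly the bookkeeping carried out for $r=3$, $\vv{n}=(3,2,3)$ in Section \ref{sec:quickexample}, now done in general; no new input is needed.

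First I would rewrite the $i$th bracket in Proposition \ref{prop:polysfullcharacter} as a single sum
\begin{equation*}
    \chi_{n_i}(u_i)\chi_{n_i}(u_{i+1}) + \cdots + \chi_{[n_i]_2}(u_i)\chi_{[n_i]_2}(u_{i+1}) = \sum_{m_i \in S_i} \chi_{m_i}(u_i)\chi_{m_i}(u_{i+1}),
\end{equation*}
where $S_i = \{ n_i, n_i-2, \dots, [n_i]_2 \}$ is the $i$th factor of the Cartesian product $\Lambda_{\vv{n}}$ of Definition \ref{def:Lambda}. Distributing the product $\prod_{i=1}^r$ over these $r$ sums, the resulting monomials are indexed precisely by tuples $\vv{m} = (m_1,\dots,m_r)$ with $m_i \in S_i$, i.e. by points $\vv{m} \in \Lambda_{\vv{n}}$, and the monomial attached to $\vv{m}$ is $\prod_{i=1}^r \chi_{m_i}(u_i)\chi_{m_i}(u_{i+1})$. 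Thus
\begin{equation*}
    \chi(\Poly_{\vv{n}}) = \sum_{\vv{m}\in\Lambda_{\vv{n}}} \prod_{i=1}^r \chi_{m_i}(u_i)\chi_{m_i}(u_{i+1}).
\end{equation*}

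Second, for a fixed $\vv{m}$ I would regroup the $2r$ character factors by toral variable. Since the characters are polynomials in the $u_i$ they commute freely, and with indices taken mod $r$ in $\{1,\dots,r\}$ the variable $u_j$ appears in $\prod_{i=1}^r \chi_{m_i}(u_i)\chi_{m_i}(u_{i+1})$ exactly twice: as $\chi_{m_j}(u_j)$ from the $j$th factor, and as $\chi_{m_{j-1}}(u_j)$ from the $(j-1)$th factor, where $u_{i+1}=u_j$ when $i=j-1$ (including $u_{r+1}=u_1$ coming from the $r$th factor). Collecting these pairs rewrites the monomial as $\prod_{j=1}^r \chi_{m_{j-1}}(u_j)\chi_{m_j}(u_j)$, and renaming $j$ as $i$ and summing over $\vv{m}\in\Lambda_{\vv{n}}$ yields the claimed identity. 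The only thing to watch is this cyclic reindexing — that each bracket is visited exactly once as the source of a $\chi_{m_{i-1}}(u_i)$ factor — which is routine given the mod $r$ conventions fixed earlier, so I do not expect any real obstacle in this proof.
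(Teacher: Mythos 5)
Your proof is correct and follows essentially the same approach as the paper's: expand the product from Proposition \ref{prop:polysfullcharacter} to index terms by $\vv{m}\in\Lambda_{\vv{n}}$, then regroup the two factors involving each $u_i$ (one from the $i$th bracket, one from the $(i-1)$th) to obtain $\chi_{m_{i-1}}(u_i)\chi_{m_i}(u_i)$. You spell out the cyclic reindexing more explicitly than the paper does, but there is no substantive difference.
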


\begin{proof}
The terms in each factor of the product in Proposition \ref{prop:long-character} are indexed by $\{ n_i, n_i-2, \dots, [n_i]_2 \}$ and so when we expand the product into a sum of terms, the resulting terms will be indexed by points $m \in \Lambda_n$. Each of these terms is a product of one term from each of the $r$ factors. Each term looks like $\chi_{m_i}(u_i)\chi_{m_i}(u_{i+1})$. Thus, in the resulting product there will be two factors involving each $u_i$, but coming from neighboring indices $m_{i-1}$ and $m_i$.
\end{proof}

\begin{proposition}
    For any $n \in \N^r$ we have
    \begin{equation*}
        \chi(\C[V]_n) = \sum_{m \in \Lambda_n} \, \sum_{p \in \lambda_m} \chi_p,
    \end{equation*}
    where $\chi_p$ denotes the product $\chi_{p_1}(u_1)\chi_{p_2}(u_2)\cdots \chi_{p_r}(u_r)$ for $p \in \N^r$.
\end{proposition}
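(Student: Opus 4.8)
The plan is to start from the previous proposition, which already gives
\begin{equation*}
    \chi(\Poly_{\vv{n}}) = \sum_{\vv{m} \in \Lambda_{\vv{n}}} \bigg( \prod_{i=1}^r \chi_{m_{i-1}}(u_i)\chi_{m_i}(u_i) \bigg),
\end{equation*}
and to rewrite each paired factor $\chi_{m_{i-1}}(u_i)\chi_{m_i}(u_i)$ using the Clebsch--Gordan Proposition. For a fixed $\vv{m} \in \Lambda_{\vv{n}}$ and a fixed index $i$ (indices mod $r$ with representatives $\{1,\dots,r\}$, exactly as in Definition \ref{def:Lambda}), Clebsch--Gordan gives
\begin{equation*}
    \chi_{m_{i-1}}(u_i)\chi_{m_i}(u_i) = \sum_{p_i} \chi_{p_i}(u_i),
\end{equation*}
where $p_i$ ranges over $\{ m_{i-1}+m_i,\; m_{i-1}+m_i-2,\; \dots,\; \abs{m_{i-1}-m_i} \}$, which is precisely the $i$th factor set appearing in the definition of $\lambda_{\vv{m}}$ from Definition \ref{def:Lambda}.

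Next I would multiply these $r$ single-variable expansions together. After the Clebsch--Gordan rewrite, the variable $u_i$ occurs only in the $i$th factor (each summand is a single $\chi_{p_i}(u_i)$ rather than the pair $\chi_{m_{i-1}}(u_i)\chi_{m_i}(u_i)$), so the product distributes with no cross-terms: selecting one summand from each of the $r$ factors amounts to choosing a tuple $\vv{p} = (p_1,\dots,p_r)$ with $p_i$ in the $i$th factor set, i.e. choosing a point $\vv{p} \in \lambda_{\vv{m}}$, and the associated term is $\prod_{i=1}^r \chi_{p_i}(u_i) = \chi_{\vv{p}}$. Hence
\begin{equation*}
    \prod_{i=1}^r \chi_{m_{i-1}}(u_i)\chi_{m_i}(u_i) = \sum_{\vv{p} \in \lambda_{\vv{m}}} \chi_{\vv{p}},
\end{equation*}
and substituting this into the outer sum over $\vv{m} \in \Lambda_{\vv{n}}$ yields the claimed identity.

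The only point requiring a moment's care is the bookkeeping of the cyclic indices: the factor $\chi_{m_{i-1}}(u_i)\chi_{m_i}(u_i)$ genuinely couples the $(i-1)$st and $i$th entries of $\vv{m}$, which is exactly the coupling built into $\lambda_{\vv{m}}$, so the index set produced by expanding matches $\lambda_{\vv{m}}$ on the nose. Beyond that there is no real obstacle; the substance of the statement lives entirely in the two prior propositions, and the present step is just the distributive law combined with Clebsch--Gordan, with the mechanism already illustrated concretely in Section \ref{sec:quickexample}.
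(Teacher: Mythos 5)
Your proof is correct and takes the same route the paper does: apply Clebsch--Gordan to each paired factor $\chi_{m_{i-1}}(u_i)\chi_{m_i}(u_i)$, observe that the expansion of the product is indexed by $\vv{p} \in \lambda_{\vv{m}}$, and substitute into the outer sum over $\Lambda_{\vv{n}}$.
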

\begin{proof}
    We first consider $\prod_{i=1}^r \chi_{m_{i-1}}(u_i)\chi_{m_i}(u_i)$ from the previous proposition. By the well-known Clebsch-Gordan identity for decomposing tensor products of $SL_2$ irreducibles \cite[page 339]{GoodmanWallach}, we can multiply two characters and obtain a sum of irreducible characters, as in
    \begin{equation*}
        \chi_a(u)\chi_b(u) = \chi_{a+b}(u) + \chi_{a+b-2}(u) + \cdots + \chi_{|a-b|}(u).
    \end{equation*}
    Applying this to every factor in the product from the previous proposition we obtain
    \begin{equation*}
        \prod_{i=1}^r \chi_{m_{i-1}}(u_i)\chi_{m_i}(u_i) =
         \prod_{i=1}^r \bigg[ \chi_{(m_{i-1}+m_i)}(u_i) + \chi_{(m_{i-1}+m_i-2)}(u_i) + \cdots + \chi_{|m_{i-1}-m_i|}(u_i) \bigg].
    \end{equation*}
    But now it's clear that when we expand \textit{that} product, the terms in the resulting sum will be indexed by points $p \in \lambda_m$.
\end{proof}

\begin{lemma}\label{lemma:shell-decompositions}
For $n \in \N^r$, let $n_{min}$ denote $\text{min}\{ n_i : i \in [r] \}$, and let $\Lambda_n$ and $\lambda_n$ be as described in Theorem \ref{thm:harmonics-character}, with $\partial \Lambda_n$ denoting the subset of all $x \in \Lambda_n$ such that $x+2e \notin \Lambda_n$ and similarly for $\partial \lambda_n$. Then
\begin{align*}
    \Lambda_n &= \partial \Lambda_n \cup \Lambda_{n-2e} \text{ if } n_{min} \geq 2,\\
    \Lambda_n &= \partial \Lambda_n \text{ if } 0 \leq n_{min} < 2,\\
    \lambda_m &= \partial \lambda_m \cup \partial \lambda_{m-e} \cup \cdots \cup \partial \lambda_{m - m_{min}e}.
\end{align*}
\end{lemma}

\begin{proof}
These facts follow easily from the definitions. We note that replacing $m$ by $m - e$ we have $(m_{i-1} - 1) + (m_i - 1) = m_{i-1} + m_i - 2$ but $|(m_{i-1} - 1) - (m_i - 1) | = |m_{i-1} - m_i|$ and therefore $\lambda_m = \partial \lambda_m \cup \lambda_{m-e}$. Applying this repeatedly yields the result.
\end{proof}

\color{black} Recall the sets $\Lambda_n$ and $\lambda_n$ were defined for $n \in \mathbb{N}^r$. However, we now introduce the convention that $\Lambda_n, \partial \Lambda_n, \lambda_n,$ and $\partial \lambda_n$ are empty sets whenever $n \in \mathbb{Z}^r \setminus \mathbb{N}^r$, i.e. whenever $n$ has at least one negative component, and the further convention that sums over empty sets are zero. \color{black}

\begin{proposition} \label{prop:beforeReindex} 
For any $n \in \mathbb{N}^r$ such that $n_{min} \geq 2$ we have
    \begin{equation*}
        \chi(\C[V]_n) - \chi(\C[V]_{n-2e}) =
        \sum_{m \in \partial \Lambda_n} \bigg[  \sum_{p \in \partial \lambda_m} \chi_p  +  \sum_{p \in \partial \lambda_{m-e}} \chi_p  + \cdots +  \sum_{p \in \partial \lambda_{m - n_{min}e}} \chi_p  \bigg].
    \end{equation*}
\end{proposition}

\begin{proof}
    Since $\Lambda_n = \partial\Lambda_n \cup \Lambda_{n-2e}$ whenever $n_{min} \geq 2$ by Lemma \ref{lemma:shell-decompositions}, we have
    \begin{align*}
        \chi(\C[V]_n) &= \sum_{m \in \Lambda_n} \sum_{p \in \lambda_{m}} \chi_{p} \\
         &= \sum_{m \in \partial\Lambda_n}  \sum_{p \in \lambda_{m}} \chi_{p}  + \sum_{m \in \Lambda_{n-2e}}  \sum_{p \in \lambda_{m}} \chi_{p} 
    \end{align*}
    But the second term is simply $\chi(\C[V]_{n-2e})$, which means
    \begin{equation*}
        \chi(\C[V]_n) = \sum_{m \in \partial\Lambda_n}  \sum_{p \in \lambda_{m}} \chi_{p}  + \chi(\C[V]_{n-2e})
    \end{equation*}
    Since by Lemma \ref{lemma:shell-decompositions}
    \begin{equation*}
        \lambda_{m} = \partial \lambda_{m} \cup \partial \lambda_{m-e} \cup \cdots \cup \partial \lambda_{m - m_{min}e}
    \end{equation*}
    we have
    \begin{equation*}
        \chi(\C[V]_n) - \chi(\C[V]_{n-2e}) = 
        \sum_{m \in \partial\Lambda_n} \bigg[  \sum_{p \in \partial \lambda_{m}} \chi_{p}  +  \sum_{p \in \partial \lambda_{m-e}} \chi_{p}  + \cdots + \sum_{p \in \partial \lambda_{m - m_{min}e}} \chi_{p} \bigg]
    \end{equation*}
    \color{black} The expression in brackets has $m_{min} + 1$ sums, which obviously depends on the choice of $m \in \partial \Lambda_n$. Since $n \in \partial \Lambda_n$, we may have $m=n$, which is when $m_{min}$ is maximized. Thus we can replace $m - m_{min}e$ with $m - n_{min}e$ in the expression in brackets, with the convention that sums over $p \in \partial \lambda_{m-je}$ are empty when $j > m_{min}$. This introduces empty sums, but simplifies the expression in a way that becomes useful during Propositions \ref{prop:reindex} and \ref{prop:finalinduction}. \color{black}
\end{proof}

Now consider distributing the outer sum over the inner ones, and examine just one term.
\begin{proposition} \label{prop:reindex} Re-indexing the double sum: For any $n \in \mathbb{N}^r$ and $j \in \mathbb{N}$ we have
    \begin{equation*}
        \sum_{m \in \partial\Lambda_n}  \sum_{p \in \partial \lambda_{m-je}} \chi_{p}  = \sum_{m' \in \partial \Lambda_{n-je}}  \sum_{p \in \partial \lambda_{m'}} \chi_{p} 
    \end{equation*}    
\end{proposition}

\begin{proof}\color{black} 
We need only verify the equality of the indexing sets $A = \{ p : p \in \partial\lambda_{m - je} \text{ for some } m \in \partial \Lambda_n \}$ and $B = \{ p : p \in \partial\lambda_{m'} \text{ for some } m' \in \partial\Lambda_{n - je} \}$.

First we prove $A \subseteq B$. \color{black} Choose $p \in \partial \lambda_{m-je}$ for some $m \in \partial \Lambda_n$. In particular,  $\partial \lambda_{m-je} \neq\emptyset$, so $j \leq m_{min}$. Define $m'$ by defining $m_i' = m_i - j$ for all $i$. We have that $p_i \in \{ m_{i-1}-j+m_i-j, m_{i-1}-j+m_i-j-2, \dots, |m_{i-1}-j-(m_i-j)| \}$ for all $i$, where there is some $k$ such that $p_k = m_{k-1}-j+m_k-j$, since we are in the $ \partial \lambda_{m-je}$. But this is the same as saying that $p_i \in \{ m_{i-1}'+m_i', m_{i-1}'+m_i'-2, \dots, |m_{i-1}'-m_i')| \}$ for all $i$, and for the same $k$, $p_k = m_{k-1}'+m_k'$. Thus $p \in \partial \lambda_{m'}$ as needed. Now, is $m' \in \partial \Lambda_{n-je}$? Since $m \in \partial\Lambda_n$ then $m_i \in \{ n_i, n_i-2, \dots, [n_i]_2 \}$ for all $i$, and there is some $k$ such that $m_k=n_k$. Since $j \leq m_{min}$ then $m_i' = m_i - j$ is always $\geq 0$ for all $i$, so we know $m_i' \in \{ n_i-j, n_i-j-2, \dots, [n_i-j]_2 \}$ for all $i$, and for the same $k$, $m_k'+j=n_i$ so $m_k' = n_i-j$. This means $m' \in \partial \Lambda_{n-je}$ as required.

We now prove $A \supseteq B$. Pick $p \in \partial \lambda_{m'}$ for some $m' \in \partial \Lambda_{n-je}$. Define $m$ by defining $m_i = m_i'+j$ for all $i$. Is $m \in \partial \Lambda_n$? We have that $m_i' \in \{ n_i-j, n_i-j-2, \dots, [n_i-j]_2 \}$ for all $i$, which means that $m_i'+j \in \{ n_i, n_i-2, \dots, [n_i-j]_2 +j \}$, which implies that $m_i \in \{ n_i, n_i-2, \dots, [n_i]_2 \}$ for all $i$. We also have the existence of $l$ such that $m_l'=n_l-j$, implying $m_l-j=n_l-j$, which means $m_l=n_l$. Thus $m \in \partial\Lambda_n$. Is $p \in \partial \lambda_{m-je}$? We have that $p_i \in \{ m_{i-1}'+m_i', m_{i-1}'+m_i'-2, \dots, |m_{i-1}'-m_i'| \}$ for all $i$, where for some $k$ we have $p_k = m_{k-1}'+m_k'$. Then, for all $i$, since $m_i = m_i'+j$ we have $p_i \in \{ m_{i-1}-j+m_i-j, m_{i-1}-j+m_i-j-2, \dots, |(m_{i-1}-j)+(m_i-j)| \}$ with $p_k=m_{k-1}-j+m_k-j$. This means $p \in \partial \lambda_{m-je}$. This completes the proof.
\end{proof}

\begin{proposition} \label{prop:finalinduction} For any $n \in \mathbb{N}^r$ such that $n_{min} \geq 2$ we have
    \begin{align*}
        \chi(\C[V]_n) - \chi(\C[V]_{n-2e}) &= \sum_{m \in \partial \Lambda_n}  \sum_{p \in \partial \lambda_{m} } \chi_{p} \\
         & + \sum_{m \in \partial \Lambda_{n-e} }  \sum_{p \in \partial \lambda_{m}} \chi_{p} \\
         & + \cdots \\
         & + \sum_{m \in \partial \Lambda_{n-n_{min}e}}  \sum_{p \in \partial \lambda_{m}} \chi_{p} 
    \end{align*}
\end{proposition}

\begin{proof}
    This follows easily from the previous two Propositions. First distribute the outer sum over the inner sums in Proposition \ref{prop:beforeReindex} and then re-index each resulting double sum by using Proposition \ref{prop:reindex}.
\end{proof}

Finally we come to the decomposition of $\mathcal{H}_n$ and the proof of Theorem \ref{thm:harmonics-character}.

\begin{proof}[Proof of Theorem 1]
Since Equation (\ref{eq:harmonics-character}) requires that we find
\begin{equation*}
    \mathcal{H}_n = \bigoplus_{m \in \partial \Lambda_n} \bigoplus_{p \in \partial \lambda_m} F_{z,p}
\end{equation*}
we will prove the theorem by examining the character $\chi(\mathcal{H}_n)$. Recall that for $\ell \in \N$ and $j \in [r]$, we let $\chi_\ell(u_j) = u_j^\ell + u_j^{\ell - 2} + \cdots + u_j^{-\ell}$. By Proposition \ref{prop:central-characters} we already know \color{black} the action of $Z^\ell$ is correct in both cases, so we need only verify that for $T^r$ we have \color{black}
\begin{equation*}
        \chi \big( \mathcal{H}_n \big) = \sum_{m \in \partial\Lambda_n}   \sum_{p \in \partial \lambda_{m}} \chi_{p}
\end{equation*}
where the $\chi_{p}$ for $p \in \N^r$ denotes the product
\begin{equation*}
    \chi_{p_1}(u_1)\chi_{p_2}(u_2) \cdots \chi_{p_r}(u_r).
\end{equation*}

The proof is by induction. Consider the graded component $n \in \N^r$. We refer to $\text{min}\{ n_i : i \in [r] \}$ as $n_{min}$. The base case is when $n_{min}=0$. In this case, $\Lambda_n = \partial\Lambda_n$, since if $n_k=0$ then $\{ n_k, n_k-2, \dots, [n_k]_2 \}$ degenerates to  $\{ 0 \}$. Then every $m \in \partial\Lambda_n$ has at least one component, say $k$, where $m_k=0$. Recall $\lambda_{m} = \prod \{ m_{i-1}+m_i, m_{i-1}+m_i-2, \dots, |m_{i-1}-m_i| \}$ by definition, but then the $k$th factor becomes  $\{ m_{k-1} \}$. This implies that $\partial \lambda_{m}$ = $\lambda_{m}$. Then
\begin{equation*}
    \chi(\C[V]_n) = \sum_{m \in \Lambda_n}  \sum_{p \in \lambda_{m}} \chi_{p}  = \sum_{m \in \partial \Lambda_n}   \sum_{p \in \partial \lambda_{m}} \chi_{p}.
\end{equation*}\color{black}
Note that $\chi(\mathbb{C}[V]_n) = \chi(\mathcal{H}_n)$ in the case $n_{min}=0$, since the two generators $f=tr(X_1 X_2 \cdots X_r)$ and $g=tr((X_1 X_2 \cdots X_r)^2)$ of $\mathbb{C}[V]^K$ have multidegrees $(1,1,\dots,1)$ and $(2,2,\dots,2)$, respectively. Since $\chi(\C[V]_n) = \chi(\mathcal{H}_n)$ has the appropriate formula, that completes the base case $n_{min} = 0$.

Consider now the case $n_{min} = 1$. Then $\C[V]_n$ contains one of the generators for the invariants, $f$, of multigraded degree $e = (1,1,\dots,1)$, and does not contain the second generator $g$. \color{black} In this case
\begin{equation*}
    \chi(\C[V]_n) = \chi(\mathcal{H}_n) + \chi(\Harm_{n-e})
\end{equation*}
At this point, we know both $\chi(\C[V]_n)$ and $\chi(\Harm_{n-e})$, and we can solve for the unknown $\chi(\mathcal{H}_n)$. Since $n_{min} = 1$ implies $\partial \Lambda_n$ = $\Lambda_n$, we have
\begin{align*}
    \chi(\C[V]_n) &= \sum_{m \in \Lambda_n}  \sum_{p \in \lambda_{m}} \chi_{p}  \\
     &= \sum_{m \in \partial\Lambda_n} \bigg[  \sum_{p \in \partial \lambda_{m}} \chi_{p} + \sum_{p \in \partial \lambda_{m-e}} \chi_{p} \bigg] \\
      &= \sum_{m \in \partial\Lambda_n}  \sum_{p \in \partial \lambda_{m}} \chi_{p}  + \sum_{m \in \partial\Lambda_n}  \sum_{p \in \partial \lambda_{m-e}} \chi_{p} \\
       &= \sum_{m \in \partial\Lambda_n}  \sum_{p \in \partial \lambda_{m}} \chi_{p}  + \sum_{m \in \partial \Lambda_{n-e}}  \sum_{p \in \partial \lambda_{m}} \chi_{p} \\
       &= \sum_{m \in \partial\Lambda_n} \sum_{p \in \partial \lambda_{m}} \chi_{p} + \chi(\Harm_{n-e})
\end{align*}
where we have used Proposition \ref{prop:reindex}, as well as our induction base case of $n_{min}=0$. It is now clear that $\chi(\mathcal{H}_n)$ is leftover, and has the correct formula.

Consider the graded component $n \in \N^r$, where $n_{min} \geq 2$. Since by induction we know the character of any $\Harm_{n-je}$ for $j > 0$ then Proposition \ref{prop:finalinduction} becomes
\begin{equation*}
    \chi(\C[V]_n) - \chi(\C[V]_{n-2e}) = \sum_{m \in \partial\Lambda_n} \sum_{p \in \partial \lambda_{m}} \chi_{p} + \chi(\Harm_{n-e}) + \dots + \chi(\Harm_{n-n_{min}e})
\end{equation*}
We can break up Proposition \ref{prop:invariants} into two separate lines as in the table below.
\begin{table}[h] \label{tab:subtraction}
    \centering
    \scalemath{0.9}{
    \begin{tabular}{c c c c c c c c }
    $\chi(\mathcal{H}_n)$ & $=\chi(\C[V]_n)$ & $- \chi(\Harm_{n-e})$ & $-\chi(\Harm_{n-2e})$ & $-\chi(\Harm_{n-3e})$ & $-\chi(\Harm_{n-4e})$ & $- \dots$  \\
      &  &  & $-\chi(\Harm_{n-2e})$ & $- \chi(\Harm_{n-3e})$ & $- 2\chi(\Harm_{n-4e})$ & $- \dots$ \\
     \hline
    $\chi(\mathcal{H}_n)$ & $=\chi(\C[V]_n)$ & $- \chi(\Harm_{n-e})$ & $-2\chi(\Harm_{n-2e})$ & $- 2\chi(\Harm_{n-3e})$ & $- 3\chi(\Harm_{n-4e})$ & $- \dots$
    \end{tabular}}
    \caption{Subtracting characters in two ways}
\end{table}
But recognizing the second line of subtractions in the table as nothing but $\chi(\C[V]_{n-2e})$, and comparing the two equations, we see that
\begin{equation*}
    \chi \big( \mathcal{H}_n \big) = \sum_{m \in \partial\Lambda_n}   \sum_{p \in \partial \lambda_{m}} \chi_{p} 
\end{equation*}
as claimed. This completes the proof of Theorem \ref{thm:harmonics-character}.
\end{proof}


 {\footnotesize\linespread{0.8}
\bibliographystyle{amsplain}
\bibliography{references}

Alexander Heaton, Lawrence University, Appleton, WI, USA
}


\end{document}